\newtheorem{theorem}{Theorem}[section]
\newtheorem{corollary}[theorem]{Corollary}
\newtheorem{lemma}[theorem]{Lemma}
\newtheorem{proposition}[theorem]{Proposition}
\newtheorem{result}[theorem]{Result}
\begin{document}
\title{Mitsch's order and inclusion for binary relations and partitions}
\author{D. G. FitzGerald}
\address{School of Mathematics and Physics, University of Tasmania\\
Private Bag 37, Hobart 7001, Australia.}
\email{D.FitzGerald@utas.edu.au}

\begin{abstract}
Mitsch's natural partial order on the semigroup of binary relations is here
characterised by equations in the theory of relation algebras. The natural
partial order has a complex relationship with the compatible partial order of
inclusion, which is explored by means of a sublattice of the lattice of
preorders on the semigroup. The corresponding sublattice for the partition
monoid is also described.

\end{abstract}
\keywords{Semigroup of binary relations, Partition monoid, Natural partial order, Set inclusion}
\maketitle

\section{ Natural partial orders}

In \cite{Mi}, Heinz Mitsch formulated a characterisation of the natural
partial order $\leq$ on the full transformation semigroup $\mathcal{T}_{X}$
which did not use inverses or idempotents, and went on to define the natural
partial order $\leq$ on any semigroup $S$ by
\begin{equation}
a\leq b\text{ if }a=b\text{ or there are }x,y\in S\text{ such that}~a=ax=bx=yb
\label{def}%
\end{equation}
for $a,b\in S$. (The discovery was also made independently by P. M. Higgins,
but remained unpublished.) Observe that $a=ya$ follows.\emph{ }Mitsch's
natural partial order has now been characterised, and its properties
investigated, for several concrete classes of non-regular semigroups---in
\cite{MSS, SS} for some semigroups of (partial) transformations, and by Namnak
and Preechasilp \cite{NP} for the semigroup $\mathcal{B}_{X}$ of all binary
relations on the set $X$.

The partial order of inclusion which is carried by $\mathcal{B}_{X}$ may also
be thought of as `natural', and it is the broad purpose of this note to
discuss the relationship between these two partial orders on $\mathcal{B}%
_{X}.$ Moreover, the same questions are addressed for the partition monoid
$\mathcal{P}_{X},$ which also carries two `natural' partial orders. So we
shall use a slightly different nomenclature here for the sake of clarity,
mostly referring to partial orders as just \emph{orders, }and the natural
partial order as \emph{Mitsch's order}. We begin by collecting some
information about $\mathcal{B}_{X}.$

\section{Binary relations}

The notation used here for binary relations follows that found in, for
example, Clifford and Preston \cite{CP}, with the addition of complementation
of relations defined by
\[
x\,\alpha^{c}\,y\iff\left(  x,y\right)  \not \in \alpha
\]
for $x,y\in X.$ Note that the symbol $\circ$ for composition will be
suppressed, except for the composites of order relations on $\mathcal{B}_{X}.$
We will make use of the identity relation on $X,$ $\iota=\left\{  \left(
x,x\right)  :x\in X\right\}  ,$ and the universal relation $\omega=X\times X.$

The following logical equivalence will also be required; it is the `Theorem
K'\ of De Morgan \cite[p. \textit{xxx}]{DeM} (see also e.g. \cite{Mdx}).

\begin{result}
\label{morgan}For $\alpha,\beta,\xi\in\mathcal{B}_{X},~$%
\[
\beta\xi\subseteq\alpha\iff\xi\subseteq\left(  \beta^{-1}\alpha^{c}\right)
^{c}\iff\beta\subseteq\left(  \alpha^{c}\xi^{-1}\right)  ^{c}.
\]

\end{result}

Result \ref{morgan} will be used in the following form:

\begin{corollary}
\label{coroll}(i) If the set $\left\{  \xi\in\mathcal{B}_{X}:\beta\xi
=\alpha\right\}  $ is non-empty, it has greatest element $\left(  \beta
^{-1}\alpha^{c}\right)  ^{c}$ in the inclusion order.\newline\hspace
{2.75cm}(ii) If the set $\left\{  \xi\in\mathcal{B}_{X}:\xi\beta
=\alpha\right\}  $ is non-empty, it has greatest element $\left(  \alpha
^{c}\beta^{-1}\right)  ^{c}$ in the inclusion order.
\end{corollary}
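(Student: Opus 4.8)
The plan is to prove part (i) by isolating the candidate greatest element $\xi_{0}=\left(\beta^{-1}\alpha^{c}\right)^{c}$ and checking two things separately: that $\xi_{0}$ is an upper bound for the solution set in the inclusion order, and that $\xi_{0}$ itself lies in the solution set. Part (ii) will then follow by an entirely symmetric argument.

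First I would establish the upper-bound property. If $\xi$ is any element of the set, then $\beta\xi=\alpha$, so in particular $\beta\xi\subseteq\alpha$. The first equivalence in Result \ref{morgan} converts this directly into $\xi\subseteq\left(\beta^{-1}\alpha^{c}\right)^{c}=\xi_{0}$. Hence every solution is contained in $\xi_{0}$, which makes $\xi_{0}$ an upper bound with respect to inclusion.

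Next I would show that $\xi_{0}$ is itself a solution. Applying the first equivalence of Result \ref{morgan} in the case $\xi=\xi_{0}$, the trivial inclusion $\xi_{0}\subseteq\xi_{0}$ yields $\beta\xi_{0}\subseteq\alpha$ for free. For the reverse inclusion I would invoke the standing hypothesis that the set is non-empty: fixing some $\xi$ with $\beta\xi=\alpha$, the upper-bound step gives $\xi\subseteq\xi_{0}$, and since left composition by $\beta$ is monotone for inclusion, $\alpha=\beta\xi\subseteq\beta\xi_{0}$. The two inclusions combine to $\beta\xi_{0}=\alpha$, so $\xi_{0}$ belongs to the set and is therefore its greatest element.

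The only point requiring care is the step that uses non-emptiness: the De Morgan equivalence supplies $\beta\xi_{0}\subseteq\alpha$ automatically, but without a witnessing solution one cannot promote this to equality, and indeed the set may be empty. Everything else is a routine invocation of Result \ref{morgan} together with monotonicity of composition. For part (ii) I would run the identical two-step argument using the transposed version $\xi\beta\subseteq\alpha\iff\xi\subseteq\left(\alpha^{c}\beta^{-1}\right)^{c}$ of Result \ref{morgan}; this version is obtained from the stated one by taking inverses throughout and using that complementation and inversion commute.
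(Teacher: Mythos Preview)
Your proposal is correct and follows the same argument as the paper: use Result~\ref{morgan} to get the upper bound $\xi\subseteq(\beta^{-1}\alpha^{c})^{c}$, then combine monotonicity of composition with another application of Result~\ref{morgan} to obtain the sandwich $\alpha=\beta\xi\subseteq\beta(\beta^{-1}\alpha^{c})^{c}\subseteq\alpha$. The paper compresses both steps into that single chain, and for part~(ii) simply says ``dually''; note also that the equivalence $\xi\beta\subseteq\alpha\iff\xi\subseteq(\alpha^{c}\beta^{-1})^{c}$ you need is already the third clause of Result~\ref{morgan} (after relabelling), so no separate derivation via inverses is required.
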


\begin{proof}
(i) By Result \ref{morgan} above, $\beta\xi=\alpha$ implies $\xi
\subseteq\left(  \beta^{-1}\alpha^{c}\right)  ^{c}$; but the latter implies,
again using Result \ref{morgan}, that
\[
\alpha=\beta\xi\subseteq\beta\left(  \beta^{-1}\alpha^{c}\right)
^{c}\subseteq\alpha.
\]
Part (ii) is proven dually.
\end{proof}

\section{Equational criterion for the Mitsch order on $\mathcal{B}_{X}$}

The basic definition (\ref{def}) of $\alpha\leq\beta$ is existentially
quantified. In $\mathcal{B}_{X}$ there is a purely equational equivalent:

\begin{theorem}
\label{criterion}For $\alpha,\beta\in\mathcal{B}_{X},$ $\alpha\leq\beta$ if
and only if
\[
\alpha=\alpha\left(  \beta^{-1}\alpha^{c}\right)  ^{c}=\beta\left(  \beta
^{-1}\alpha^{c}\right)  ^{c}=\left(  \alpha^{c}\beta^{-1}\right)  ^{c}\beta.
\]

\end{theorem}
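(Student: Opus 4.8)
The plan is to prove the two implications separately, first recording a convenient reformulation of the hypothesis. Since $\mathcal{B}_X$ is a monoid, the clause ``$\alpha=\beta$'' in (\ref{def}) is absorbed into the existential clause (if $\alpha=\beta$ take $x=y=\iota$), so I shall treat $\alpha\leq\beta$ simply as the assertion that there exist $x,y\in\mathcal{B}_X$ with $\alpha=\alpha x=\beta x=y\beta$; from these the promised identity $\alpha=y\alpha$ follows at once, since $y\alpha=y\beta x=\alpha x=\alpha$. For brevity write $\xi_{0}=\left(\beta^{-1}\alpha^{c}\right)^{c}$ and $\eta_{0}=\left(\alpha^{c}\beta^{-1}\right)^{c}$ for the two expressions occurring in the statement.

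The sufficiency direction is immediate and I would dispose of it first: the three displayed equations read exactly as $\alpha=\alpha\xi_{0}=\beta\xi_{0}=\eta_{0}\beta$, so choosing $x=\xi_{0}$ and $y=\eta_{0}$ supplies witnesses for the defining conditions and hence $\alpha\leq\beta$. For necessity, suppose $x,y$ satisfy $\alpha=\alpha x=\beta x=y\beta$. Two of the three equations come straight from Corollary \ref{coroll}: since $\beta x=\alpha$, the set $\{\xi:\beta\xi=\alpha\}$ is non-empty, so by part (i) its greatest element is $\xi_{0}$ and in particular $\beta\xi_{0}=\alpha$, the middle equation; dually $y\beta=\alpha$ makes $\{\eta:\eta\beta=\alpha\}$ non-empty, so part (ii) yields $\eta_{0}\beta=\alpha$, the last equation.

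The remaining equation $\alpha=\alpha\xi_{0}$ is the crux, and I expect it to be the one (small) obstacle: the greatest-element argument controls $\beta\xi_{0}$ but says nothing directly about $\alpha\xi_{0}$, and one should \emph{not} attempt to derive $\alpha\subseteq\beta$, which is false in general and is precisely the discrepancy this paper sets out to study. The device I would use instead is to route the computation through $\beta$, combining $\alpha=y\beta$ with the already-established $\beta\xi_{0}=\alpha$ and the derived $y\alpha=\alpha$:
\[
\alpha\xi_{0}=\left(y\beta\right)\xi_{0}=y\left(\beta\xi_{0}\right)=y\alpha=\alpha .
\]
This yields the first equation and completes the necessity direction, establishing the equivalence. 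The whole argument thus rests on Corollary \ref{coroll} for the two ``outer'' equations and on this single associativity manoeuvre for the ``inner'' one.
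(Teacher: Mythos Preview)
Your proof is correct and mirrors the paper's own argument almost exactly: both obtain $\beta\xi_{0}=\alpha$ and $\eta_{0}\beta=\alpha$ from Corollary~\ref{coroll}, and both establish the remaining equation $\alpha=\alpha\xi_{0}$ via the same associativity chain $\alpha\xi_{0}=y\beta\xi_{0}=y\alpha=\alpha$ (the paper writes it in the reverse direction as $\alpha=\eta\alpha=\eta\beta\xi_{0}=\alpha\xi_{0}$, with $\eta$ in place of your $y$). The converse direction is handled identically.
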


\begin{proof}
Suppose $\alpha\leq\beta.$ By definition there are $\xi,\eta$ such that
\[
\alpha=\alpha\xi=\beta\xi=\eta\beta
\]
and hence both
\[
\eta\alpha=\eta\beta\xi=\alpha\xi=\alpha
\]
and%
\[
\alpha=\beta\left(  \beta^{-1}\alpha^{c}\right)  ^{c}=\left(  \alpha^{c}%
\beta^{-1}\right)  ^{c}\beta,
\]
the latter by Corollary \ref{coroll}. Now%
\begin{align*}
\alpha &  =\eta\alpha=\eta\beta\left(  \beta^{-1}\alpha^{c}\right)
^{c}=\alpha\left(  \beta^{-1}\alpha^{c}\right)  ^{c}\\
&  =\beta\left(  \beta^{-1}\alpha^{c}\right)  ^{c}=\left(  \alpha^{c}%
\beta^{-1}\right)  ^{c}\beta.
\end{align*}
Conversely,
\[
\alpha=\alpha\left(  \beta^{-1}\alpha^{c}\right)  ^{c}=\beta\left(  \beta
^{-1}\alpha^{c}\right)  ^{c}=\left(  \alpha^{c}\beta^{-1}\right)  ^{c}\beta
\]
demonstrates $\alpha\leq\beta.$\hfill
\end{proof}

\medskip Regarded as a computation, this criterion is of polynomial time
complexity in $\left\vert X\right\vert $, as is also the case for tests of the
divisibility preorders (by Corollary \ref{coroll}, $\alpha=\beta\xi$ if and
only if $\alpha=\beta\left(  \beta^{-1}\alpha^{c}\right)  ^{c},$ etc.), but in
contrast to the NP-complete tests for the $\mathcal{J}$-preorder \cite{My}. Of
course algorithmic complexity is not the only issue. Namnak and Preechasilp
\cite{NP} characterise Mitsch's order for binary relations with the aid of
Zaretski\u{\i}'s criteria for divisibility \cite{Z} which, though also of
worst-case exponential complexity, proved very convenient for the purposes of
finding compatible elements, atoms and maximal elements in the Mitsch order
\cite{NP}. The equations of Theorem \ref{criterion} are complex in the
different sense that they belong to a theory of semigroups enriched by
operations of inversion and complementation, in fact, to the theory of
relation algebras \cite{Mdx}. It may be observed that all the proofs of
sections 4 and 5 apply to the multiplicative reducts of (abstract) relation
algebras \cite{Mdx}, and not just the representable ones $\mathcal{B}_{X}.$

\section{Connexions with the inclusion order}

In discussing Mitsch's order and the inclusion order on $\mathcal{B}_{X}$,
\cite{NP} notes their logical independence. In fact, we can see that every
inclusion atom is a Mitsch atom, and every non-empty relation on $X$ is in an
inclusion interval between two Mitsch atoms. Moreover, permutations of $X$ are
Mitsch-maximal, but far from being either maximal elements or atoms in the
inclusion order. Yet the relationship between the two orders is subtle, and
worthy of further exploration. We illustrate this by next finding a
substructure of $\mathcal{B}_{X}$ in which $\leq$ agrees with $\subseteq~,$
and others where $\leq$ agrees with reverse inclusion $\supseteq\,.$ We shall
use the following statement, easily proved.

\begin{lemma}
\label{hall}Let $S$ be a semigroup, $T$ be a regular subsemigroup of $S,$ and
$a,b\in T.$ Then $a\leq b$ in $T$ if and only if $a\leq b$ in $S.$
\end{lemma}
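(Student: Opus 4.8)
The plan is to treat the two directions separately, since one is immediate and all the content lies in the other. If $a\le b$ in $T$, then either $a=b$ or there are $x,y\in T$ with $a=ax=bx=yb$; as $T\subseteq S$, the very same $x,y$ witness $a\le b$ in $S$. This direction needs no regularity and works for any subsemigroup, so I would dispose of it in a line.

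For the converse I assume $a\le b$ in $S$ and must produce witnesses lying in $T$. Setting aside the trivial case $a=b$, I have $x,y\in S$ with $a=ax=bx=yb$, whence also $a=ya$ as noted above. The difficulty is that $x$ and $y$ need not belong to $T$, so they cannot serve directly; the whole point is to replace them by elements of $T$ having the same action on $a$ and $b$. This is exactly where regularity is used: since $b\in T$ and $T$ is regular, choose an inverse $b^{*}\in T$ of $b$, so that $b\,b^{*}b=b$. The key construction I would propose is $x'=b^{*}a$ and $y'=ab^{*}$; both lie in $T$ because $a,b^{*}\in T$.

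What remains---and where I expect the only real care to be needed---is to check the four defining equalities for these new witnesses, each reduced to the relations already available in $S$ together with $b\,b^{*}b=b$. Thus $bx'=b\,b^{*}a=b\,b^{*}bx=bx=a$ from $a=bx$; then $ax'=(yb)x'=y(bx')=ya=a$ from $a=yb$ and $a=ya$; dually $y'b=ab^{*}b=yb\,b^{*}b=yb=a$ from $a=yb$, and $y'a=(y'b)x=ax=a$ follows from $a=bx$ and $a=ax$. Together these give $a=ax'=bx'=y'b$ with $x',y'\in T$, that is, $a\le b$ in $T$, completing the argument. I would remark finally that only the regularity of $b$ is actually invoked, so the hypothesis that $T$ is regular enters solely through the existence of an inverse of $b$ within $T$.
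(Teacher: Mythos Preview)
Your proof is correct. The paper itself does not prove this lemma, stating only that it is ``easily proved,'' so there is no authorial argument to compare against; your approach---replacing the witnesses $x,y\in S$ by $x'=b^{*}a$ and $y'=ab^{*}$ built from an inverse $b^{*}$ of $b$ in $T$---is the standard one, and your verification of the identities $a=ax'=bx'=y'b$ is clean and complete. Your closing remark that only regularity of $b$ in $T$ is used is also accurate and worth keeping.
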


The symmetric inverse monoid $\mathcal{I}_{X}$ is a regular subsemigroup of
$\mathcal{B}_{X},$ and the natural partial order on $\mathcal{I}_{X}$
coincides with inclusion. So we may apply Lemma \ref{hall} to $\mathcal{I}%
_{X}.$

\begin{corollary}
\label{ix}(i) If $\alpha,\beta\in\mathcal{I}_{X},~$then $\alpha\leq\beta~$if
and only if $\alpha\subseteq\beta.$\newline\hspace*{2.75cm}(ii) If $\beta
\in\mathcal{I}_{X},$ then $\alpha\subseteq\beta$ implies $\alpha\leq\beta.$
\end{corollary}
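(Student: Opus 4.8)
The plan is to deduce both parts directly from Lemma \ref{hall} together with the preceding observation that $\mathcal{I}_X$ is a regular subsemigroup of $\mathcal{B}_X$ on which Mitsch's order $\leq$ restricts to inclusion $\subseteq$. For part (i), I take $\alpha,\beta\in\mathcal{I}_X$. By Lemma \ref{hall}, applied to $T=\mathcal{I}_X$ inside $S=\mathcal{B}_X$, the relation $\alpha\leq\beta$ as computed in $\mathcal{B}_X$ holds exactly when it holds in $\mathcal{I}_X$. Since the natural partial order of the inverse monoid $\mathcal{I}_X$ is known to coincide with set inclusion, this is in turn equivalent to $\alpha\subseteq\beta$. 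No computation beyond citing these two facts is required.

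For part (ii), the aim is to drop the assumption $\alpha\in\mathcal{I}_X$ and retain only $\beta\in\mathcal{I}_X$ with $\alpha\subseteq\beta$. The key reduction is to show that $\alpha$ then lies automatically in $\mathcal{I}_X$, after which (ii) follows from (i). Since $\beta$ is a partial injection --- that is, both $\beta$ and $\beta^{-1}$ are functional --- and $\alpha\subseteq\beta$ means that $\alpha$ is a subset of the ordered pairs of $\beta$, the sub-relation inherits both functionality and injectivity. Hence $\alpha\in\mathcal{I}_X$, and part (i) yields $\alpha\leq\beta$.

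The step needing the most care is precisely this passage, in part (ii), from $\alpha\subseteq\beta$ to $\alpha\in\mathcal{I}_X$. The observation is elementary but essential: whereas a sub-relation of an arbitrary element of $\mathcal{B}_X$ need share no structure with it, a sub-relation of a partial injection is again a partial injection, because the two defining conditions (at most one image per point, at most one preimage per point) pass to any subset of the pairs. Once this closure property is in place, the remainder is a direct appeal to Lemma \ref{hall} and to the identification of $\leq$ with $\subseteq$ on $\mathcal{I}_X$, so I would not expect any genuine obstacle; the content of the corollary is essentially the conjunction of these two prior facts with the closure of $\mathcal{I}_X$ under sub-relations.
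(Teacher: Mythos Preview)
Your proposal is correct and follows essentially the same approach as the paper: part (i) is deduced from Lemma \ref{hall} together with the fact that the natural order on $\mathcal{I}_X$ coincides with inclusion, and part (ii) reduces to part (i) via the observation that $\beta\in\mathcal{I}_X$ and $\alpha\subseteq\beta$ force $\alpha\in\mathcal{I}_X$.
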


Part (ii) relies on the observation that $\beta\in\mathcal{I}_{X}$ and
$\alpha\subseteq\beta$ imply $\alpha\in\mathcal{I}_{X}$. Of course,
$\alpha\leq\beta$ need not entail $\alpha\in\mathcal{I}_{X}.$

To obtain pairs $\left(  \alpha,\beta\right)  $ such that $\leq$ agrees with
reverse inclusion $\supseteq,$ first let $\alpha$ be a reflexive and
transitive relation on $X$ (so, a preorder), and define a subset of
$\mathcal{B}_{X}$ by
\[
F\left(  \alpha\right)  =\left\{  \beta\in\mathcal{B}_{X}:\alpha\beta
=\alpha=\beta\alpha\right\}  .
\]

\begin{proposition}
\label{fa} For all $\beta\in F\left(  \alpha\right)  ,$ $\alpha\leq\beta~$if
and only if~$\beta\subseteq\alpha.$
\end{proposition}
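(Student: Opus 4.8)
The plan is to show that, for every $\beta \in F(\alpha)$, \emph{both} conditions $\alpha \leq \beta$ and $\beta \subseteq \alpha$ hold outright; the stated biconditional then follows at once, since each implication has a conclusion that is always true. Two preliminary facts will do all the work. First, a preorder is idempotent: reflexivity gives $\iota \subseteq \alpha$, hence $\alpha = \iota\alpha \subseteq \alpha\alpha$, and with transitivity $\alpha\alpha \subseteq \alpha$ this yields $\alpha\alpha = \alpha$. Second, membership $\beta \in F(\alpha)$ is by definition the pair of equations $\alpha\beta = \alpha = \beta\alpha$, which I plan to use directly as multiplicative witnesses.

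For the reverse inclusion I would argue from reflexivity alone: since $\iota \subseteq \alpha$, monotonicity of composition gives $\beta = \iota\beta \subseteq \alpha\beta = \alpha$, so $\beta \subseteq \alpha$ for \emph{every} $\beta \in F(\alpha)$. As this holds unconditionally, the implication $\alpha \leq \beta \Rightarrow \beta \subseteq \alpha$ is secured.

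For the Mitsch comparison I would exhibit explicit witnesses in the defining scheme $a = ax = bx = yb$ of (\ref{def}), choosing $x = y = \alpha$. Idempotency gives $\alpha\alpha = \alpha$, while the defining equations of $F(\alpha)$ give $\beta\alpha = \alpha$ and $\alpha\beta = \alpha$; hence $\alpha = \alpha\alpha = \beta\alpha = \alpha\beta$, which is exactly the requirement $\alpha = \alpha x = \beta x = y\beta$. Thus $\alpha \leq \beta$ for every $\beta \in F(\alpha)$, and in particular the implication $\beta \subseteq \alpha \Rightarrow \alpha \leq \beta$ holds.

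There is no real obstacle here: the only point needing attention is to confirm that the single choice $x = y = \alpha$ meets all three equalities of (\ref{def}) at once, which it does precisely because the preorder hypothesis and the definition of $F(\alpha)$ were arranged to supply them. One could instead verify $\alpha \leq \beta$ through the equational criterion of Theorem \ref{criterion}, but the direct witnesses are shorter and sidestep the complements and inverses appearing there. It seems worth recording that both conditions in fact hold throughout $F(\alpha)$, so the substance of the proposition is that $\alpha$ is the least element of $F(\alpha)$ for Mitsch's order and simultaneously the greatest for inclusion, which is exactly why $\leq$ and $\supseteq$ agree on these pairs.
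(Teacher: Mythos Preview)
Your proof is correct and follows essentially the same route as the paper: both verify that each side of the biconditional in fact holds for \emph{every} $\beta\in F(\alpha)$, using $\iota\subseteq\alpha$ to get $\beta\subseteq\alpha$ and the equations $\alpha\alpha=\alpha=\beta\alpha=\alpha\beta$ as Mitsch witnesses. The paper packages the latter by noting that $F(\alpha)$ is a subsemigroup with zero $\alpha$, but this amounts to exactly your choice $x=y=\alpha$.
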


\begin{proof}
Clearly $F\left(  \alpha\right)  $ is a subsemigroup of $\mathcal{B}_{X},$
with zero element $\alpha.$ So for all $\beta\in F\left(  \alpha\right)  ,$ we
have $\alpha\leq\beta$ and, since $\iota\subseteq\alpha,$ also $\beta
\subseteq\beta\alpha=\alpha.$
\end{proof}

Corollary \ref{ix} shows that it may be instructive to consider the
conjunction of the natural partial order with set inclusion, which is an order
on $\mathcal{B}_{X}$ which we naturally write as $\leq\cap\subseteq\,.$
Similarly, Proposition \ref{fa} suggests investigating the conjuction of
$\leq$ with reverse inclusion, an order written as $\leq\cap\supseteq\,$. The
next Proposition, which extends Proposition \ref{fa}, gives an alternative
characterisation for $\leq\cap\supseteq~;$ there seems to be no analogous
description of $\leq\cap\subseteq$ .

\begin{proposition}
\label{meet}For all $\alpha,\beta\in\mathcal{B}_{X},~\alpha\leq\beta$ and
$\alpha\supseteq\beta$ if and only if there are $\varepsilon=\varepsilon^{2}$
and $\phi=\phi^{2}$ such that $\iota\subseteq\varepsilon$ and $\alpha
=\beta\varepsilon=\phi\beta.$
\end{proposition}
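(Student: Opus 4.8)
The plan is to prove both directions of the equivalence, handling the forward implication by constructing the required idempotents explicitly from the data furnished by the hypotheses, and the converse by verifying directly that the stated conditions imply both $\alpha\le\beta$ and $\alpha\supseteq\beta$. Throughout I would lean on Theorem~\ref{criterion} and Corollary~\ref{coroll}, which already package the existential quantifiers of Mitsch's order into concrete expressions.

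For the forward direction, assume $\alpha\le\beta$ and $\alpha\supseteq\beta$. Theorem~\ref{criterion} gives
\[
\alpha=\beta\left(\beta^{-1}\alpha^{c}\right)^{c}=\left(\alpha^{c}\beta^{-1}\right)^{c}\beta,
\]
so the natural candidates are $\varepsilon=\left(\beta^{-1}\alpha^{c}\right)^{c}$ and $\phi=\left(\alpha^{c}\beta^{-1}\right)^{c}$, which by Corollary~\ref{coroll} are the greatest solutions of $\beta\xi=\alpha$ and $\xi\beta=\alpha$ respectively. The first task is to show these are idempotent. Here $\alpha\supseteq\beta$ should enter: since $\beta\varepsilon=\alpha\supseteq\beta=\beta\iota$, one expects $\iota\subseteq\varepsilon$ to follow from the maximality of $\varepsilon$ among solutions $\xi$ of $\beta\xi=\alpha$ (one must check $\beta\iota=\beta\subseteq\alpha$, then invoke that $\varepsilon$ is the greatest such $\xi$, noting $\iota$ itself need only satisfy $\beta\iota\subseteq\alpha$, which is just $\beta\subseteq\alpha$). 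Given $\iota\subseteq\varepsilon$, idempotency of $\varepsilon$ would come from $\beta\varepsilon^{2}=\alpha\varepsilon$ together with $\alpha=\beta\varepsilon$, combined with $\varepsilon\subseteq\varepsilon^{2}$ (immediate from $\iota\subseteq\varepsilon$) and maximality forcing $\varepsilon^{2}\subseteq\varepsilon$; the symmetric argument handles $\phi=\phi^{2}$.

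For the converse, suppose $\varepsilon=\varepsilon^{2}$ and $\phi=\phi^{2}$ satisfy $\iota\subseteq\varepsilon$ and $\alpha=\beta\varepsilon=\phi\beta$. Reverse inclusion is immediate: $\beta=\beta\iota\subseteq\beta\varepsilon=\alpha$. For $\alpha\le\beta$ I would exhibit the multipliers required by the definition~(\ref{def}): from $\alpha=\beta\varepsilon$ and idempotency, $\alpha\varepsilon=\beta\varepsilon^{2}=\beta\varepsilon=\alpha$, so $\varepsilon$ serves as the right multiplier $x$ with $\alpha=\alpha\varepsilon=\beta\varepsilon$, and dually $\phi$ gives $\alpha=\phi\alpha=\phi\beta$, so $y=\phi$ completes the witness $\alpha=\alpha\varepsilon=\beta\varepsilon=\phi\beta$, establishing $\alpha\le\beta$.

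The main obstacle I anticipate is the idempotency argument in the forward direction: extracting $\varepsilon=\varepsilon^{2}$ is not automatic from $\alpha\le\beta$ alone, and it is precisely here that the hypothesis $\alpha\supseteq\beta$ (equivalently $\iota\subseteq\varepsilon$) must do genuine work. The delicate point is to argue that $\varepsilon^{2}$ is again a solution of $\beta\xi=\alpha$—which requires $\beta\varepsilon^{2}=\alpha$, i.e. $\alpha\varepsilon=\alpha$—and then to close the loop via the extremal characterisation of Corollary~\ref{coroll}. I would verify $\alpha\varepsilon=\alpha$ carefully rather than treat it as routine, since it is the linchpin connecting the reverse-inclusion hypothesis to the existence of genuine idempotent witnesses.
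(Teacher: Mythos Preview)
Your proposal is correct and follows essentially the same route as the paper: take $\varepsilon=(\beta^{-1}\alpha^{c})^{c}$ as the maximal right solution, use Theorem~\ref{criterion} to get $\alpha\varepsilon=\alpha$ (hence $\beta\varepsilon^{2}=\alpha$ and $\varepsilon^{2}\subseteq\varepsilon$ by maximality), and use $\beta\subseteq\alpha$ to get $\iota\subseteq\varepsilon$ (hence $\varepsilon\subseteq\varepsilon^{2}$), with the converse handled exactly as you describe. One small clarification: the identity $\alpha\varepsilon=\alpha$ that you flag as the ``linchpin'' actually comes for free from $\alpha\leq\beta$ via Theorem~\ref{criterion} and needs no input from the reverse-inclusion hypothesis---the latter is used solely to secure $\iota\subseteq\varepsilon$.
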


\begin{proof}
Let $\alpha\leq\beta.$ From Corollary \ref{coroll} there exists the element
$\varepsilon,$ maximum with respect to $\subseteq$ such that $\alpha
=\beta\varepsilon,$ and $\alpha=\alpha\varepsilon$ also holds by Theorem
\ref{criterion}. Then also $\alpha=\beta\varepsilon^{2}$ and thus
$\varepsilon^{2}\subseteq\varepsilon.$ But from $\beta\iota\subseteq\alpha$ we
have $\iota\subseteq\varepsilon,$ so $\varepsilon\subseteq\varepsilon^{2}$ and
$\varepsilon=\varepsilon^{2}.$ Similarly $\phi=\phi^{2}$ with $\alpha
=\phi\beta.$

Conversely, if the conditions hold then plainly $\alpha\leq\beta$ as in the
regular case, but also $\iota\subseteq\varepsilon$ implies $\beta
\subseteq\varepsilon\beta=\alpha.$
\end{proof}

\section{The sublattice of preorders generated by $\leq,$ $\subseteq$ and
$\supseteq$}

The subsemigroups $F\!\left(  \alpha\right)  $ and $\mathcal{I}_{X}$ of
section 4 clearly show that, if $\left\vert X\right\vert \geq2,$ there can be
no order relation on $\mathcal{B}_{X}$ which contains $\leq$ and either of
$\subseteq$ and $\supseteq$\thinspace. However the set of preorders on
$\mathcal{B}_{X}$ is bounded by the relation of equality $=$ and the universal
relation on $\mathcal{B}_{X},$ and is closed under arbitrary intersections,
and so forms a lattice. In a similar situation, the papers \cite{MSS} and
\cite{SS} derive interesting results from considering the composite of
inclusion with $\leq$, and this idea also turns out to be useful here.

\begin{proposition}
\label{rectang}For all $\alpha,\beta\in\mathcal{B}_{X},$ there exists
$\gamma\in\mathcal{B}_{X}$ such that $\alpha\subseteq\gamma\leq\beta$ if and
only if $\alpha\omega\alpha\subseteq\beta\omega\beta.$
\end{proposition}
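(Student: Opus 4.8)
The plan is to reduce the stated inequality between the ``rectangles'' $\alpha\omega\alpha$ and $\beta\omega\beta$ to a single containment, and then to exhibit an explicit witness $\gamma$. First I would record the elementary identity $\alpha\omega\alpha=\mathrm{dom}\,\alpha\times\mathrm{ran}\,\alpha$, valid because $(x,y)\in\alpha\omega\alpha$ holds exactly when $x$ lies in the domain and $y$ in the range of $\alpha$. Two consequences follow at once: $\alpha\subseteq\alpha\omega\alpha$, and $\alpha\omega\alpha\subseteq\beta\omega\beta$ is equivalent to $\alpha\subseteq\beta\omega\beta$. Thus the proposition asserts that some $\gamma$ with $\alpha\subseteq\gamma\leq\beta$ exists if and only if $\alpha\subseteq\beta\omega\beta$, and the form of the right-hand side already points to the candidate $\gamma=\beta\omega\beta$ for the hard direction.

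For necessity, suppose $\alpha\subseteq\gamma\leq\beta$. If $\gamma=\beta$ the inclusions $\gamma\subseteq\beta\omega$ and $\gamma\subseteq\omega\beta$ hold trivially, since $\beta\subseteq\beta\omega\cap\omega\beta$ for every relation; otherwise the definition \eqref{def} supplies $\xi,\eta$ with $\gamma=\beta\xi=\eta\beta$, and as $\xi,\eta\subseteq\omega$ these again give $\gamma\subseteq\beta\omega$ and $\gamma\subseteq\omega\beta$. Sandwiching and collapsing $\omega\omega\omega=\omega$ then yields $\gamma\omega\gamma\subseteq\beta\omega\cdot\omega\cdot\omega\beta=\beta\omega\beta$, and monotonicity of composition applied to $\alpha\subseteq\gamma$ gives $\alpha\omega\alpha\subseteq\gamma\omega\gamma\subseteq\beta\omega\beta$, as required.

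For sufficiency, assume $\alpha\subseteq\beta\omega\beta$ and put $\gamma=\beta\omega\beta$, so that $\alpha\subseteq\gamma$ is immediate. It then remains to prove $\gamma\leq\beta$, and here I would verify the defining equations of \eqref{def} directly with the witnesses $\xi=\omega\beta$ and $\eta=\beta\omega$. The identities $\gamma=\beta\xi$ and $\gamma=\eta\beta$ are just the associative regroupings $\beta\omega\beta=\beta(\omega\beta)=(\beta\omega)\beta$, while $\gamma=\gamma\xi$ reduces to $\beta\omega\beta\omega\beta=\beta\omega\beta$, which is exactly the assertion that $\beta\omega$ is idempotent, i.e. $\beta\omega\beta\omega=\beta\omega$. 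This identity is the computational core; it holds precisely when $\beta\neq\varnothing$, while the degenerate case $\beta=\varnothing$ forces $\alpha=\varnothing$ and is settled by taking $\gamma=\varnothing$.

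The only genuine obstacle is the converse, and within it the choice of witness: one must recognise $\beta\omega\beta$, the domain--range rectangle of $\beta$, as a relation that simultaneously dominates every $\alpha$ with the correct domain and range and lies below $\beta$ in Mitsch's order. Once that relation is identified, every remaining verification is a short composition of universal relations, the decisive point being the idempotency $(\beta\omega)^2=\beta\omega$ together with its dual $(\omega\beta)^2=\omega\beta$. I expect no difficulty beyond bookkeeping for the empty relation, for which the ambient identities $\omega\omega=\omega$ and $\beta\omega\beta\omega=\beta\omega$ break down.
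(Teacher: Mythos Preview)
Your proof is correct and takes essentially the same route as the paper: both directions use the witness $\gamma=\beta\omega\beta$, with the key step being the identity $\beta\omega\beta\omega\beta=\beta\omega\beta$ that yields $\beta\omega\beta\leq\beta$. One minor point: the identities $(\beta\omega)^{2}=\beta\omega$ and $\omega\omega=\omega$ do \emph{not} fail when $\beta=\varnothing$ (both sides are then $\varnothing$), so your separate treatment of that case, while harmless, is unnecessary.
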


\begin{proof}
Suppose $\alpha\subseteq\gamma\leq\beta.$ Then there are $\xi,\eta$ such that
$\gamma=\beta\xi=\eta\beta,$ so
\[
\alpha\omega\alpha\subseteq\gamma\omega\gamma=\beta\xi\omega\eta\beta
\subseteq\beta\omega\beta.
\]
Since $\beta\subseteq\beta\beta^{-1}\beta\subseteq\beta\omega\beta,$ we have
\[
\beta\omega\beta\subseteq\beta\omega\beta\omega\beta\subseteq\beta\omega
\beta.
\]
But now $\beta\omega\beta=\beta\omega\beta\omega\beta$ shows that $\beta
\omega\beta\leq\beta.$ As above we have $\alpha\subseteq\alpha\omega\alpha,$
so $\alpha\omega\alpha\subseteq\beta\omega\beta$ implies $\alpha\subseteq
\beta\omega\beta\leq\beta.$
\end{proof}

As a corollary, we have the join of the Mitsch and inclusion orders in the
lattice of preorders on $\mathcal{B}_{X}.$

\begin{corollary}
\label{lteinc}(i) The composite $\leq\circ\subseteq$ ~is contained in
$\subseteq\circ\leq~.$ \newline\hspace*{2.4cm}(ii) $\subseteq\circ\leq$ is the
join of $\subseteq$ and $\leq$ in the lattice of preorders on $\mathcal{B}%
_{X}.$
\end{corollary}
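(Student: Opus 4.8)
The plan is to route both parts through Proposition \ref{rectang}, which converts the existential statement ``there is $\gamma$ with $\alpha\subseteq\gamma\leq\beta$'' (that is, membership in the composite $\subseteq\circ\leq$) into the single inclusion $\alpha\omega\alpha\subseteq\beta\omega\beta$. Part (i) then reduces to verifying this inclusion, and part (ii) is the standard lattice-theoretic identification of a join once (i) is available.

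For part (i), suppose $(\alpha,\beta)\in\,\leq\circ\subseteq$, so there is $\gamma$ with $\alpha\leq\gamma\subseteq\beta$. Unpacking $\alpha\leq\gamma$ by definition (\ref{def}), together with the observation there that $\alpha=\eta\gamma$ follows, gives $\xi,\eta$ with $\alpha=\gamma\xi=\eta\gamma$ (the case $\alpha=\gamma$ being covered by $\xi=\eta=\iota$). The key computation is
\[
\alpha\omega\alpha=\gamma\xi\,\omega\,\eta\gamma=\gamma\left(\xi\omega\eta\right)\gamma\subseteq\gamma\omega\gamma,
\]
where $\xi\omega\eta\subseteq\omega$ since $\omega$ absorbs compositions on either side. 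Monotonicity of composition and $\gamma\subseteq\beta$ then yield $\gamma\omega\gamma\subseteq\beta\omega\beta$, so $\alpha\omega\alpha\subseteq\beta\omega\beta$. Proposition \ref{rectang} now delivers some $\gamma'$ with $\alpha\subseteq\gamma'\leq\beta$, that is, $(\alpha,\beta)\in\,\subseteq\circ\leq$, establishing the containment.

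For part (ii), recall that the join of $\subseteq$ and $\leq$ in the lattice of preorders on $\mathcal{B}_X$ is the least preorder containing their union. I would first check that $\subseteq\circ\leq$ is itself such a preorder. Reflexivity is immediate from $\alpha\subseteq\alpha\leq\alpha$, and each of $\subseteq$ and $\leq$ lies inside $\subseteq\circ\leq$ by inserting the reflexive factor in the other coordinate. Transitivity follows from part (i): by associativity and monotonicity of composition,
\[
(\subseteq\circ\leq)\circ(\subseteq\circ\leq)=\,\subseteq\circ(\leq\circ\subseteq)\circ\leq\,\subseteq\,\subseteq\circ\subseteq\circ\leq\circ\leq\,=\,\subseteq\circ\leq,
\]
the middle step being part (i) and the last equality the transitivity of $\subseteq$ and of $\leq$ taken separately. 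Finally, any preorder $\preceq$ containing both orders must contain $\subseteq\circ\leq$, since $\alpha\subseteq\gamma\leq\beta$ forces $\alpha\preceq\gamma\preceq\beta$ and hence $\alpha\preceq\beta$; thus $\subseteq\circ\leq$ is the least such preorder, i.e.\ the join.

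The main obstacle is the computation in part (i), specifically obtaining $\alpha\omega\alpha\subseteq\gamma\omega\gamma$ from $\alpha\leq\gamma$; the two-sided factorisation $\alpha=\gamma\xi=\eta\gamma$ is exactly what lets the central factor $\xi\omega\eta$ be absorbed into $\omega$. Once the containment $\leq\circ\subseteq\,\subseteq\,\subseteq\circ\leq$ of part (i) is secured, the transitivity of $\subseteq\circ\leq$ and its identification with the join are routine.
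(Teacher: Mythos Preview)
Your proof is correct and follows essentially the paper's route through Proposition~\ref{rectang}. The only difference is organizational: the paper first reads off transitivity of $\subseteq\circ\leq$ directly from the characterization $\alpha\omega\alpha\subseteq\beta\omega\beta$ (which is manifestly transitive) and then deduces (i) as a consequence, whereas you establish (i) by a direct computation and then use it to obtain transitivity in (ii).
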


\begin{proof}
(i) First let us note that $\leq$ and $\subseteq$ are subsets of
$\subseteq\circ\leq~.$ It is clear from Proposition \ref{rectang} that
$\subseteq\circ\leq$ is transitive, i.e.,
\[
\left(  \subseteq\circ\leq\right)  \circ\left(  \subseteq\circ\leq\right)
=\left(  \subseteq\circ\leq\right)
\]
and it follows that $\left(  \leq\circ\subseteq\right)  $ is contained in
$\left(  \subseteq\circ\leq\right)  .$

(ii) Also it is immediate that $\subseteq\circ\leq$ is reflexive, and so it is
a preorder on $\mathcal{B}_{X}.$ Any preorder on $\mathcal{B}_{X}$ containing
both $\subseteq$ and $\leq$ also contains $\subseteq\circ\leq\,.$ Hence
$\subseteq\circ\leq$ is the join of $\subseteq$ and $\leq\,.$
\end{proof}

That the containment in (i) is proper (for $\left\vert X\right\vert \geq2$) is
shown by the following instance. Consider a pair of distinct permutations
$\pi,\rho$: we have $\pi\subseteq\omega\leq\rho,$ but $\pi\leq\alpha
\subseteq\rho$ implies $\pi=\alpha=\rho,$ a contradiction. It also follows
that $\leq\circ\subseteq~$is not a preorder.

We turn to the composites and join of $\leq$ with reverse inclusion.

\begin{proposition}
\label{cnilte}The composite $\supseteq\circ\leq$ is the universal relation on
$\mathcal{B}_{X}$ and the join of $\supseteq$ and $\leq$ in the lattice of
preorders on $\mathcal{B}_{X}.$
\end{proposition}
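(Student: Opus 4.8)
The plan is to show two things: that the composite $\supseteq\circ\leq$ equals the universal relation $\omega_{\mathcal{B}}$ on $\mathcal{B}_X$ (so every pair $(\alpha,\beta)$ is related), and that this composite is simultaneously the join of $\supseteq$ and $\leq$ in the lattice of preorders. The second part is formally the same argument used for Corollary \ref{lteinc}(ii): once I know $\supseteq\circ\leq$ is reflexive and transitive, it is a preorder containing both $\supseteq$ and $\leq$, and any preorder containing both must contain their composite; hence it is their join. Since the universal relation is trivially a preorder, establishing the first claim essentially settles the proposition, with the join statement following as a formality.

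So the mathematical content is the first claim: for arbitrary $\alpha,\beta\in\mathcal{B}_X$ I must produce an intermediate $\gamma$ with $\alpha\supseteq\gamma\leq\beta$, i.e.\ $\gamma\subseteq\alpha$ and $\gamma\leq\beta$ in Mitsch's order. The natural candidate is the smallest possible $\gamma$, and the obvious universally available Mitsch-minimum lying below any $\beta$ is the empty relation $\varnothing$. First I would check that $\varnothing\leq\beta$ for every $\beta$: taking $\xi=\eta=\varnothing$ in the defining equations \eqref{def} gives $\varnothing=\varnothing\,\xi=\beta\,\xi=\eta\,\beta$, since composing anything with $\varnothing$ yields $\varnothing$. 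Then $\varnothing\subseteq\alpha$ holds trivially for every $\alpha$, which is the same as $\alpha\supseteq\varnothing$. Thus $\gamma=\varnothing$ witnesses $\alpha\,(\supseteq\circ\leq)\,\beta$ for every pair, so $\supseteq\circ\leq$ is all of $\omega_{\mathcal{B}}$.

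With the composite identified as universal, the join argument runs exactly as before. The universal relation is reflexive and transitive, hence a preorder; it contains both $\supseteq$ and $\leq$ since those are relations on $\mathcal{B}_X$; and it is contained in any preorder containing both $\supseteq$ and $\leq$, because such a preorder must be closed under composition and therefore contain $\supseteq\circ\leq=\omega_{\mathcal{B}}$. Trivially every preorder is contained in $\omega_{\mathcal{B}}$, so the two inclusions force equality, and $\supseteq\circ\leq$ is the join.

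I do not expect any real obstacle here; the only point requiring a moment's care is verifying $\varnothing\leq\beta$ directly from the definition rather than by appeal to a regularity argument, since $\beta$ is arbitrary and need not lie in a regular subsemigroup. The contrast with Proposition \ref{rectang} and Corollary \ref{lteinc} is instructive and worth a remark: there the join $\subseteq\circ\leq$ was a proper subrelation of $\omega_{\mathcal{B}}$ (permutations are Mitsch-maximal, blocking upward passage), whereas here the presence of the universal Mitsch-minimum $\varnothing$ collapses $\supseteq\circ\leq$ all the way to $\omega_{\mathcal{B}}$.
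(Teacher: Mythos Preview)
Your proof is correct and follows essentially the same route as the paper: both use the empty relation $\varnothing$ as the witness (since $\alpha\supseteq\varnothing\leq\beta$ for all $\alpha,\beta$), and then observe that any preorder containing both $\supseteq$ and $\leq$ must contain their composite, hence the universal relation. You merely spell out the verification that $\varnothing\leq\beta$ via $\xi=\eta=\varnothing$, which the paper leaves implicit.
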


\begin{proof}
For any $\alpha,\beta\in\mathcal{B}_{X},$ $\alpha\supseteq\varnothing\leq
\beta,$ so $\mathcal{B}_{X}\times\mathcal{B}_{X}$ coincides with
$\supseteq\circ\leq$ . Now $\mathcal{B}_{X}\times\mathcal{B}_{X}$ is plainly a
preorder, and any preorder containing both $\leq$ and $\supseteq$ must contain
$\supseteq\circ\leq$ and hence $\mathcal{B}_{X}\times\mathcal{B}_{X}.$
\end{proof}

Here too, we see that the reverse composite $\leq\circ\supseteq$ is properly
contained in $\supseteq\circ\leq\,=\,\mathcal{B}_{X}\times\mathcal{B}_{X}$
when $\left\vert X\right\vert \geq2$ (and so is not a preorder), since
$\iota\leq\alpha\supseteq\omega$ implies both $\alpha=\iota$ and
$\alpha=\omega.$

Thus we are able to describe the relationships between $\leq~,$ $\subseteq$
and $\supseteq$ in terms of the sublattice generated by $\leq\,,\subseteq$ and
$\supseteq\,$within the lattice of preorders on $\mathcal{B}_{X}$ $.$ This
sublattice is summarised by a Hasse diagram in Fig. 1; filled circles denote
orders, and additional labels in parentheses summarise conditions for
$\alpha,\beta$ to be related by the preorder.%

\begin{center}
\includegraphics[
trim=0.000000in 0.000000in -0.000367in -0.002311in,
natheight=3.300900in,
natwidth=3.672000in,
height=7.0348cm,
width=7.8129cm
]%
{Fig1.tif}%
\label{F1}%
\end{center}

\begin{center}
{\small Fig. 1. A sublattice of preorders on }$\mathcal{B}_{X}.$%
{\small ~Filled circles denote orders.}
\end{center}

\bigskip

\section{Dualising: partition monoids}

The partition monoid on the set $X,$ denoted by $\mathcal{P}_{X},$ generates
the partition algebra, which is important in group representation theory and
statistical mechanics; for expositions, see \cite{Mn} and \cite{HR}. Study of
$\mathcal{P}_{X}$ \textit{qua }semigroup is more recent and scanty. The reader
is referred to the articles \cite{W}, \cite{E}, \cite{FL}, which include full
descriptions and examples; here we give just a concise description, which
suffices for the present purpose. A \emph{partition over }$X$ is a quotient
object (or equivalently, a partition) of the coproduct (disjoint union)
$X\sqcup X$ of two copies of the set $X.$ It is often convenient to represent
$a\in\mathcal{P}_{X}$ as a graph on the vertex set $X_{0}\cup X_{1}$ (where
$X_{0}$ and $X_{1}$ are disjoint copies of $X$) in which, for $i,j$ $\in
X_{0}\cup X_{1},~\left\{  i,j\right\}  $ is an edge if and only if $i,j$
belong to the same block of the partition $a.$ (The resulting graph is then a
union of cliques.) If a block of $a\in\mathcal{P}_{X}$ has elements in both
$X_{0}$ and $X_{1},$ we say it is a \emph{transversal} block.

For the product in $\mathcal{P}_{X},$ let $b$ be represented as a union of
cliques on $X_{1}\cup X_{2}$ (where $X_{2}$ is another copy of $X$ disjoint
from both $X_{0}$ and $X_{1}$). Thus we have a graph $\Gamma$ on the vertex
set $X_{0}\cup X_{1}\cup X_{2};$ and then we construct a graph $ab$ on the
subset of vertices $X_{0}\cup X_{2}$ by declaring, for $i,j\in X_{0}\cup
X_{2},$ that $\left\{  i,j\right\}  $ is an edge of $ab$ if and only if there
is a path in $\Gamma$ from $i$ to $j$. Note $ab$ is a union of cliques, and
each path may be taken, without loss of generality, to have edges taken
alternately from $a$ and $b,$ if null initial and terminal edges are allowed.

The relevance of $\mathcal{P}_{X}$ in the present paper lies in its
relationship with the semigroup of binary relations. It takes just a short
paragraph to explain how the partition monoid $\mathcal{P}_{X}$ is
conceptually dual to the semigroup of binary relations; we begin with a
categorical description of $\mathcal{B}_{X}.$ A \emph{relation from }$X$\emph{
to }$Y$ over a category with products is a subobject of the product $X\times
Y.$ If the base category also has pullbacks and images, there is a natural way
of defining the product of a relation from $X$ to $Y$ with a relation from $Y$
to $Z.$ Over the category $\mathbf{Set}$ this product is the usual composition
of relations, and so is associative; thus we derive the category of binary
relations over $\mathbf{Set},$ and the endomorphism monoid at the object $X$
is simply $\mathcal{B}_{X}.$ We could call $\mathcal{B}_{X}$ the
\emph{relation monoid at }$X$ over $\mathbf{Set}.$ Now the category
$\mathbf{Set}^{\text{opp}}$ also turns out (non-trivially) to satisfy the
conditions above. A product in $\mathbf{Set}^{\text{opp}}$ is the same as a
coproduct of sets, and a subobject in $\mathbf{Set}^{\text{opp}}$ is a
quotient object in $\mathbf{Set}.$ So elements of the relation monoid at $X$
over $\mathbf{Set}^{\text{opp}}$ do indeed constitute the partition monoid
$\mathcal{P}_{X}$ as defined above.

More significantly, the multiplication operations in $\mathcal{P}_{X}$ and
$\mathcal{B}_{X}$ are categorical dual constructs as well. The monoids
$\mathcal{B}_{X}$ and $\mathcal{P}_{X}$ share the properties of containing, in
a canonical way, the semigroups $\mathcal{T}_{X}$ of all maps $X\rightarrow
X,$ and $\mathcal{I}_{X}$ of all partial injective maps $X\rightarrow X;$
$\mathcal{P}_{X}$ additionally has the property of being a regular $\ast
$-semigroup, and containing, in a canonical way, the dual symmetric inverse
monoid $\mathcal{I}_{X}^{\ast}.$ Importantly for present purposes,
$\mathcal{P}_{X}$ also carries, like $\mathcal{B}_{X},$ an inclusion order, in
which we write (for $a,b\in\mathcal{P}_{X}$) $a\subseteq b$ if and only if
every edge of $a$ is an edge of $b.$ This order is the usual refinement order
on set partitions, and is compatible with the product in $\mathcal{P}_{X}$:

\begin{lemma}
Let $a,b,c\in\mathcal{P}_{X}$ with $a\subseteq b.$ Then $ac\subseteq bc$ and
dually $ca\subseteq cb.$
\end{lemma}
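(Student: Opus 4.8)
The plan is to argue entirely inside the graph representation of $\mathcal{P}_X$, exploiting two facts: the product is computed by path-connectivity in a single ``stacked'' graph, and connectivity can only improve when edges are added. First I would set up the relevant stacked graphs. To compute $ac$ I represent $a$ as a union of cliques on $X_0\cup X_1$ and $c$ as a union of cliques on $X_1\cup X_2$, obtaining a graph $\Gamma_a$ on the vertex set $X_0\cup X_1\cup X_2$ whose edge set is exactly the union of the edges of $a$ and the edges of $c$; then, for $i,j\in X_0\cup X_2$, the pair $\{i,j\}$ is an edge of $ac$ precisely when $i$ and $j$ are joined by a path in $\Gamma_a$. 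Forming $bc$ in the same way produces a graph $\Gamma_b$ on the same vertex set, in which the contribution of $c$ on $X_1\cup X_2$ is identical and only the $a$-part on $X_0\cup X_1$ has been replaced by the $b$-part.

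The key observation is then a monotonicity of edge sets. Since $a\subseteq b$ means by definition that every edge of $a$ is an edge of $b$, and the $c$-portion of the two stacked graphs coincides, every edge of $\Gamma_a$ is an edge of $\Gamma_b$; that is, $\Gamma_a\subseteq\Gamma_b$ as graphs on $X_0\cup X_1\cup X_2$. From here I would invoke monotonicity of path-connectivity: any path in $\Gamma_a$ joining $i$ to $j$ is, edge by edge, also a path in $\Gamma_b$, so whenever $i$ and $j$ are connected in $\Gamma_a$ they are connected in $\Gamma_b$. Consequently every edge of $ac$ is an edge of $bc$, which is exactly the assertion $ac\subseteq bc$. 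The dual inclusion $ca\subseteq cb$ follows by the same reasoning with the factors interchanged, stacking $c$ on $X_0\cup X_1$ and $a$ (respectively $b$) on $X_1\cup X_2$ and again using $a\subseteq b$ to get the inclusion of stacked graphs.

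I expect no serious obstacle here; the argument is essentially the remark that adjoining edges to a graph never destroys existing paths. The only points requiring care are bookkeeping ones: confirming that the union of edge sets is the correct description of the stacked graph (so that replacing $a$ by a coarser $b$ leaves the $c$-edges untouched), and noting that the convention allowing null initial and terminal edges in a path does not interfere with the edge-by-edge transfer of a path from $\Gamma_a$ to $\Gamma_b$. With these conventions in place the inclusion of the stacked graphs immediately yields the inclusion of the products.
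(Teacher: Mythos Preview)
Your argument is correct and is essentially the paper's own proof, only spelled out in more detail: the paper simply takes a path $i,\dots,j$ in $ac$, notes that each $a$-edge is a $b$-edge since $a\subseteq b$, and concludes the same path lies in $bc$. Your explicit construction of the stacked graphs $\Gamma_a\subseteq\Gamma_b$ and the appeal to monotonicity of connectivity under adding edges is exactly this reasoning unpacked.
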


\begin{proof}
Let $i,\dots j$ be a path in $ac;$ each edge from $a$ is an edge of $b,$ by
hypothesis. So $i,\dots j$ is a path in $bc.$
\end{proof}

In the canonical copy of $\mathcal{I}_{X}$ in $\mathcal{P}_{X},$the natural
partial order agrees with inclusion, and in the canonical copy of
$\mathcal{I}_{X}^{\ast},$ the natural partial order agrees with reverse
inclusion. It follows, as with $\mathcal{B}_{X},$ that $\leq$ and $\subseteq$
are related in a complex way, which we may seek to understand in the same way
as was used above for $\mathcal{B}_{X}.$ In particular, we see again that
$\leq\cap\subseteq$ and $\leq\cap\supseteq$ are non-trivial on $\mathcal{P}%
_{X}$ but have a trivial conjunction; and likewise that there can be no
partial order on $\mathcal{P}_{X}$ which contains both $\leq$ and $\subseteq,$
or both $\leq$ and $\supseteq.$

Just as in section 5, composites help identify joins of the orders within the
lattice of preorders on $\mathcal{P}_{X}.$ We need the following definitions:
let $\mathbf{d}$ be the trivial partition over $X,$ thus corresponding to the
discrete graph on $X\sqcup X,$ and let $\mathbf{k}$ be the universal partition
over $X,$ thus corresponding to the complete graph on $X\sqcup X.$ Clearly,
$\mathbf{d}$ is the least element, and $\mathbf{k}$ the greatest, in the
inclusion order of $\mathcal{P}_{X}.$

\begin{lemma}
For all $a\in\mathcal{P}_{X},$ \newline(i)~$~~\mathbf{d}a\mathbf{d}%
=\mathbf{d};$\newline(ii)~~if $a$ has a transversal block, then $\mathbf{k}%
a\mathbf{k}=\mathbf{k};$\newline(iii)~if $a$ has no transversal block, then
$a\mathbf{k}a=a.$
\end{lemma}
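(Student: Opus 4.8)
The plan is to work directly with the graph (union-of-cliques) description of the product in $\mathcal{P}_X$ and the characterisation of composition via alternating paths, proving each of the three identities by analysing which paths can exist in the relevant stacked graph $\Gamma$. Throughout I would use freely that $\mathbf{d}$ is the discrete partition (no edges at all between the two copies of $X$, i.e.\ every vertex is its own singleton block) and $\mathbf{k}$ is the complete partition (all vertices in a single block).

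For part (i), I would first establish that $\mathbf{d}a$ has no transversal block, by observing that in the stacked graph on $X_0\cup X_1\cup X_2$ the factor $\mathbf{d}$ contributes no edges between $X_0$ and $X_1$, so no path can cross from $X_0$ to $X_2$; hence every block of $\mathbf{d}a$ lies entirely in $X_0$ or entirely in $X_2$, and in fact the $X_0$-part is discrete because $\mathbf{d}$ separates all the copies of $X_0$. Then $(\mathbf{d}a)\mathbf{d}$ right-multiplies a relation whose $X_0$-side is already discrete by $\mathbf{d}$, and the same path argument forces the result to be discrete on both sides, giving $\mathbf{d}a\mathbf{d}=\mathbf{d}$. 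It is cleanest to argue the whole composite $\mathbf{d}a\mathbf{d}$ at once: a path in the triple-stacked graph realising an edge of the product must begin with a $\mathbf{d}$-edge and end with a $\mathbf{d}$-edge, and since $\mathbf{d}$ contributes only trivial (null) edges, no nontrivial connection survives.

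For part (ii), the hypothesis that $a$ has a transversal block is exactly what allows a path to cross from the $X_0$-side to the $X_2$-side. I would take the transversal block of $a$, pick vertices $p\in X_0$ and $q\in X_1$ in it, and then use that $\mathbf{k}$ joins every vertex to every other: in the composite $\mathbf{k}a\mathbf{k}$, any two target vertices $i,j\in X_0\cup X_2$ can be linked by a path that goes $i\to p$ (via $\mathbf{k}$ on the left copy, if $i\in X_0$), across $p\to q$ (via the transversal block of $a$), and then $q\to j$ (via $\mathbf{k}$ on the right copy, if $j\in X_2$), with the obvious adjustments when $i$ or $j$ already lies on the correct side. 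Hence every pair of vertices is connected and $\mathbf{k}a\mathbf{k}=\mathbf{k}$. For part (iii), the key observation is that idempotent-like behaviour $a\mathbf{k}a=a$ should hold precisely when $a$ has no transversal block: I would check that each edge of $a$ is trivially an edge of $a\mathbf{k}a$ (take a path that uses the null middle edge), and conversely that no new edge appears, because any path through the central $\mathbf{k}$ would have to enter and leave $\mathbf{k}$ through transversal blocks of the two $a$-factors, which do not exist by hypothesis; so the central $\mathbf{k}$ can only connect within a single side and contributes nothing beyond what $a$ already gives.

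I expect the main obstacle to be part (iii), and specifically the converse inclusion $a\mathbf{k}a\subseteq a$: one must argue carefully that the absence of transversal blocks in $a$ blocks every path that would try to exploit the richness of the central $\mathbf{k}$. The cleanest formulation is that, under the alternating-path normalisation already granted in the excerpt, any $X_0$-to-$X_0$ (or $X_2$-to-$X_2$, or $X_0$-to-$X_2$) path in the triple-stacked graph for $a\mathbf{k}a$ that uses a genuine $\mathbf{k}$-edge must pass through the shared middle copy of $X$ via an $a$-edge incident to that copy; but for $a$ without transversal blocks every block of $a$ lies wholly on one side, so such an incident edge cannot change sides, forcing the path to collapse into a single block of one $a$-factor. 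Making this path bookkeeping rigorous, rather than merely plausible, is where the real work lies.
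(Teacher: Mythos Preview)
Your plan is correct and matches the paper's approach exactly: the paper's entire proof reads ``By direct computation,'' and what you propose is precisely that computation carried out via the graph/path description of the product. Part~(iii) is actually easier than you anticipate---once $a$ has no transversal block there are simply no edges at all between $X_0$ and the inner layers (and likewise none between the inner layers and the outermost right layer), so the components meeting the outer layers are completely unaffected by the central $\mathbf{k}$ and $a\mathbf{k}a=a$ follows without any delicate path bookkeeping.
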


\begin{proof}
By direct computation.
\end{proof}

\begin{corollary}
For all $a\in\mathcal{P}_{X},$ $a\mathbf{d},\mathbf{d}a,a\mathbf{k}%
,\mathbf{k}a$ are idempotents.
\end{corollary}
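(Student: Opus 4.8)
The plan is to reduce the idempotency of each of the four products directly to the triple-product identities of the preceding Lemma, using nothing beyond associativity in $\mathcal{P}_X$. The key observation is that squaring any one of these products yields a word in which the relevant central triple---$\mathbf{d}a\mathbf{d}$, $\mathbf{k}a\mathbf{k}$, or $a\mathbf{k}a$---occurs as a contiguous subword, so that the Lemma collapses it at once and returns the original product.

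First I would dispose of $a\mathbf{d}$ and $\mathbf{d}a$, which is the easy part, since identity~(i) holds for every $a\in\mathcal{P}_X$ with no case distinction at all. Regrouping and applying (i) gives
\[
(a\mathbf{d})(a\mathbf{d})=a(\mathbf{d}a\mathbf{d})=a\mathbf{d}
\qquad\text{and}\qquad
(\mathbf{d}a)(\mathbf{d}a)=(\mathbf{d}a\mathbf{d})a=\mathbf{d}a,
\]
so both are idempotent immediately.

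The products $a\mathbf{k}$ and $\mathbf{k}a$ call for a case split according to whether $a$ has a transversal block, invoking (ii) and (iii) respectively. If $a$ has a transversal block, then (ii) supplies $\mathbf{k}a\mathbf{k}=\mathbf{k}$, whence
\[
(a\mathbf{k})(a\mathbf{k})=a(\mathbf{k}a\mathbf{k})=a\mathbf{k}
\qquad\text{and}\qquad
(\mathbf{k}a)(\mathbf{k}a)=(\mathbf{k}a\mathbf{k})a=\mathbf{k}a.
\]
If instead $a$ has no transversal block, then (iii) supplies $a\mathbf{k}a=a$, and the bracketing must be chosen the other way round:
\[
(a\mathbf{k})(a\mathbf{k})=(a\mathbf{k}a)\mathbf{k}=a\mathbf{k}
\qquad\text{and}\qquad
(\mathbf{k}a)(\mathbf{k}a)=\mathbf{k}(a\mathbf{k}a)=\mathbf{k}a.
\]
Either way both products are idempotent, completing all four cases.

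There is genuinely no obstacle of substance here, as the Lemma does all the work; the only point demanding any care is bookkeeping. One must note that the two sub-cases for $\mathbf{k}$ are exhaustive and that the correct associative regrouping is selected in each---the triple $\mathbf{k}a\mathbf{k}$ being exposed when a transversal block is present, and the triple $a\mathbf{k}a$ when it is absent---so that the applicable identity from the Lemma can be applied without introducing any hypothesis the Lemma does not already furnish.
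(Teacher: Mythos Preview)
Your argument is correct and is exactly the intended derivation: the paper states the result as an immediate corollary of the preceding Lemma, and your regrouping-and-collapse via identities (i)--(iii), with the transversal/non-transversal case split for the $\mathbf{k}$ products, is precisely how that deduction goes.
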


\begin{proposition}
For all $a,b\in\mathcal{P}_{X},$ $a\supseteq\circ\leq b$ if and only if
$a\mathbf{d}a\supseteq b\mathbf{d}b.$
\end{proposition}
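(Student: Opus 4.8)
The plan is to dualise the proof of Proposition \ref{rectang}, transposing the roles of $\subseteq$ and $\supseteq$ and replacing the top element $\omega$ of $\mathcal{B}_X$ by the bottom element $\mathbf{d}$ of $\mathcal{P}_X$. Throughout I will use the compatibility of inclusion with the product, the identity $\mathbf{d}b\mathbf{d}=\mathbf{d}$, and the fact that $\mathbf{d}$ is the least element of the inclusion order.

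For the forward implication, suppose $a\supseteq c\leq b$ for some $c\in\mathcal{P}_X$. If $c=b$ the conclusion is immediate, so assume there are $\xi,\eta$ with $c=c\xi=b\xi=\eta b$. From $c\subseteq a$ and two applications of compatibility I obtain $c\mathbf{d}c\subseteq a\mathbf{d}a$. Since $\mathbf{d}$ is the least element, $\mathbf{d}\subseteq\xi\mathbf{d}\eta$, and compatibility then yields $b\mathbf{d}b\subseteq b(\xi\mathbf{d}\eta)b=(b\xi)\mathbf{d}(\eta b)=c\mathbf{d}c$. Chaining these inclusions gives $b\mathbf{d}b\subseteq c\mathbf{d}c\subseteq a\mathbf{d}a$, that is, $a\mathbf{d}a\supseteq b\mathbf{d}b$.

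For the converse I take $c=b\mathbf{d}b$ as the witness, exactly dual to the choice $\gamma=\beta\omega\beta$ in Proposition \ref{rectang}. Two facts are needed. First, $b\mathbf{d}b\leq b$: using $\mathbf{d}b\mathbf{d}=\mathbf{d}$ we have $b\mathbf{d}b\mathbf{d}b=b\mathbf{d}b$, so setting $\xi=\mathbf{d}b$ and $\eta=b\mathbf{d}$ gives $b\mathbf{d}b=(b\mathbf{d}b)\xi=b\xi=\eta b$, which is the defining condition for $b\mathbf{d}b\leq b$. Second, $a\mathbf{d}a\subseteq a$, the analogue of the inclusion $\alpha\subseteq\alpha\omega\alpha$ used in Proposition \ref{rectang}. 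Granting both, the hypothesis $a\mathbf{d}a\supseteq b\mathbf{d}b$ gives $b\mathbf{d}b\subseteq a\mathbf{d}a\subseteq a$, so with $c=b\mathbf{d}b$ we have $a\supseteq c\leq b$, as required.

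The one step that does not reduce directly to the cited lemmas is $a\mathbf{d}a\subseteq a$, and this is where I expect the real work to lie; but it is settled cleanly by recalling that $\mathcal{P}_X$ is a regular $\ast$-semigroup. Since $\mathbf{d}$ is the least element of the inclusion order we have $\mathbf{d}\subseteq a^{\ast}$, and compatibility together with the regularity identity $aa^{\ast}a=a$ gives $a\mathbf{d}a\subseteq aa^{\ast}a=a$. (Alternatively one may argue directly on graphs: $a\mathbf{d}$ retains only the $X_0$-part of the blocks of $a$ while making the opposite side discrete, so $a\mathbf{d}a$ keeps precisely the non-transversal edges of $a$ and introduces none.) This completes the dualisation.
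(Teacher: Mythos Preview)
Your proof is correct and follows essentially the same approach as the paper: for the forward direction you sandwich $b\mathbf{d}b\subseteq c\mathbf{d}c\subseteq a\mathbf{d}a$ using compatibility and minimality of $\mathbf{d}$, and for the converse you take $c=b\mathbf{d}b$, establishing $b\mathbf{d}b\leq b$ from $\mathbf{d}b\mathbf{d}=\mathbf{d}$ (equivalently, idempotency of $b\mathbf{d}$ and $\mathbf{d}b$) and $a\mathbf{d}a\subseteq a$ from $\mathbf{d}\subseteq a^{\ast}$ together with $aa^{\ast}a=a$. The paper's argument is the same, only more tersely written.
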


\begin{proof}
Suppose there is $c\in\mathcal{P}_{X}$ such that $a\supseteq c\leq b.$ Then
$c=bx=yb$ for some $x,y\in\mathcal{P}_{X},$ and we have $a\mathbf{d}a\supseteq
c\mathbf{d}c=by\mathbf{d}xb\supseteq b\mathbf{d}^{3}b=b\mathbf{d}b.$

For the converse, we see that $a\mathbf{d}a\supseteq b\mathbf{d}b$ implies
$a=aa^{\ast}a\supseteq a\mathbf{d}a\supseteq b\mathbf{d}b\leq b,$ the latter
following from idempotency of $b\mathbf{d}$ and $\mathbf{d}b$.
\end{proof}

\begin{corollary}
In $\mathcal{P}_{X},$ \newline(i)~~~$\supseteq\circ\leq$ is a
preorder;\newline(ii)~~$\leq\circ\supseteq$ is contained in $\supseteq
\circ\leq;$\newline(iii)~$\supseteq\circ\leq$ is the join of $\supseteq$ and
$\leq$ in the lattice of preorders.
\end{corollary}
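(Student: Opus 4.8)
The plan is to mirror exactly the arguments used for $\mathcal{B}_{X}$ in Corollary \ref{lteinc} and Proposition \ref{cnilte}, with the preceding Proposition playing the role that Proposition \ref{rectang} played there: it reduces everything about $\supseteq\circ\leq$ to a single inclusion between the idempotent sandwiches $a\mathbf{d}a$ and $b\mathbf{d}b$. For (i) I would first note reflexivity, which is immediate since $a\supseteq a\leq a$ for every $a\in\mathcal{P}_{X}$. For transitivity I would invoke the characterisation $a\mathrel{(\supseteq\circ\leq)}b\iff a\mathbf{d}a\supseteq b\mathbf{d}b$ directly: given $a\mathrel{(\supseteq\circ\leq)}b$ and $b\mathrel{(\supseteq\circ\leq)}c$ we have $a\mathbf{d}a\supseteq b\mathbf{d}b\supseteq c\mathbf{d}c$, so transitivity of the inclusion order on $\mathcal{P}_{X}$ yields $a\mathbf{d}a\supseteq c\mathbf{d}c$, i.e.\ $a\mathrel{(\supseteq\circ\leq)}c$. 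Thus the equational characterisation turns transitivity of the composite into transitivity of $\supseteq$, and (i) follows.

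For (ii) I would first record that both generating relations sit inside the composite: $a\leq b$ gives $a\supseteq a\leq b$, so $\leq\,\subseteq\,\supseteq\circ\leq$, and $a\supseteq b$ gives $a\supseteq b\leq b$, so $\supseteq\,\subseteq\,\supseteq\circ\leq$, each using reflexivity of the other relation. Then, given $a\mathrel{(\leq\circ\supseteq)}b$ with witness $a\leq c\supseteq b$, the two containments give $a\mathrel{(\supseteq\circ\leq)}c$ and $c\mathrel{(\supseteq\circ\leq)}b$, and transitivity from part (i) delivers $a\mathrel{(\supseteq\circ\leq)}b$. This proves $\leq\circ\supseteq\,\subseteq\,\supseteq\circ\leq$. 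For (iii) I would combine these observations: $\supseteq\circ\leq$ is a preorder by (i) and contains both $\supseteq$ and $\leq$ by the containments just noted; conversely any preorder on $\mathcal{P}_{X}$ containing both is closed under composition and hence contains $\supseteq\circ\leq$. Therefore $\supseteq\circ\leq$ is the least preorder above the two, that is, their join in the lattice of preorders.

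I do not expect a genuine obstacle here, since the substantive content has already been absorbed into the preceding Proposition; the argument is purely formal, just as for $\mathcal{B}_{X}$. The one point that repays care is the bookkeeping of composition order in (ii)---verifying that the witness $c$ for $\leq\circ\supseteq$ can be inserted as the middle term of $(\supseteq\circ\leq)\circ(\supseteq\circ\leq)$---together with the standard fact that a transitively closed relation absorbs the composite of any two relations it contains. Everything else is immediate from reflexivity and the sandwich criterion.
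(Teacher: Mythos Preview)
Your proposal is correct and follows exactly the approach the paper intends: the corollary is stated without proof because it is a direct analogue of Corollary \ref{lteinc}, with the preceding Proposition (the $a\mathbf{d}a\supseteq b\mathbf{d}b$ characterisation) playing the role of Proposition \ref{rectang}. Your argument---reflexivity by $a\supseteq a\leq a$, transitivity via the sandwich criterion, the containments $\leq,\supseteq\subseteq(\supseteq\circ\leq)$, and the join property from closure under composition---is precisely the formal reasoning the paper leaves implicit.
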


\begin{proposition}
For all $a,b\in\mathcal{P}_{X},$ $a\subseteq\circ\leq b$ if and only if
$a\mathbf{k}a\subseteq b\mathbf{k}b.$
\end{proposition}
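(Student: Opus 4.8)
The plan is to mirror the proof of the $\supseteq\circ\leq$ analogue, exploiting the duality between $\mathbf{d}$ and $\mathbf{k}$ and between the two inclusion orders. First I would establish the forward direction: assume there exists $c\in\mathcal{P}_{X}$ with $a\subseteq c\leq b$. By the definition of Mitsch's order, $c=bx=yb$ for some $x,y\in\mathcal{P}_{X}$, so compatibility of $\subseteq$ with the product (the Lemma above) gives
\[
a\mathbf{k}a\subseteq c\mathbf{k}c=yb\,\mathbf{k}\,bx.
\]
The key step is then to bound $yb\,\mathbf{k}\,bx$ above by $b\mathbf{k}b$ in the inclusion order. This is exactly where the duality flips the earlier argument: whereas in the $\mathbf{d}$-version one used $by\mathbf{d}xb\supseteq b\mathbf{d}^{3}b$ (inserting $\mathbf{d}$'s and shrinking), here I expect to use that $\mathbf{k}$ is the greatest element, so $yb\subseteq\mathbf{k}b$ and $bx\subseteq b\mathbf{k}$, whence $yb\,\mathbf{k}\,bx\subseteq\mathbf{k}b\,\mathbf{k}\,b\mathbf{k}$, and then to collapse $\mathbf{k}b\mathbf{k}b\mathbf{k}$ back down to $b\mathbf{k}b$ using Lemma (ii) (when $b$ has a transversal block) or otherwise handling the no-transversal case separately via Lemma (iii).

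For the converse I would assume $a\mathbf{k}a\subseteq b\mathbf{k}b$ and exhibit a witness $c$ with $a\subseteq c\leq b$. The natural candidate is $c=b\mathbf{k}b$ itself. I would first check $a\subseteq b\mathbf{k}b$: since $a\subseteq a\mathbf{k}a$ (the $\mathbf{k}$-analogue of $\alpha\subseteq\alpha\omega\alpha$, using that $a\subseteq aa^{\ast}a\subseteq a\mathbf{k}a$ because $\mathbf{k}$ is the top element), the hypothesis $a\mathbf{k}a\subseteq b\mathbf{k}b$ gives $a\subseteq b\mathbf{k}b$. Then I would verify $b\mathbf{k}b\leq b$, which should follow from an idempotency computation $b\mathbf{k}b\,\mathbf{k}\,b=b\mathbf{k}b$ together with the Corollary that $b\mathbf{k}$ and $\mathbf{k}b$ are idempotents, placing $b\mathbf{k}b$ in the regular-type situation where Mitsch's order reduces to the existence of the factorisations $b\mathbf{k}b=b(\mathbf{k}b)=(b\mathbf{k})b$.

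The main obstacle I anticipate is the transversal/non-transversal dichotomy introduced by Lemma (ii)--(iii): unlike $\mathbf{d}$, which absorbs uniformly via $\mathbf{d}a\mathbf{d}=\mathbf{d}$, the element $\mathbf{k}$ only absorbs on both sides when the middle factor has a transversal block. So the clean collapse $\mathbf{k}b\mathbf{k}b\mathbf{k}\subseteq\mathbf{k}b\mathbf{k}$ in the forward direction, and the identity $b\mathbf{k}b\mathbf{k}b=b\mathbf{k}b$ needed for $b\mathbf{k}b\leq b$ in the converse, both hinge on whether $b$ (or the relevant composite) has a transversal block. I would isolate this as the crux and check directly, by the graph-path description of the product, that the required inclusions and idempotencies hold in both cases; I expect the case where $b$ has no transversal block to require the separate use of Lemma (iii), $a\mathbf{k}a=a$, to force the composite to behave, and this verification is the only genuinely non-routine part of the argument.
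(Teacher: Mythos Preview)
Your converse direction is fine and matches the paper: take $c=b\mathbf{k}b$, use $a\subseteq aa^{\ast}a\subseteq a\mathbf{k}a\subseteq b\mathbf{k}b$, and verify $b\mathbf{k}b\leq b$ directly from the idempotency of $b\mathbf{k}$ and $\mathbf{k}b$ (no case split needed, since the Corollary already absorbs the transversal dichotomy).

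The forward direction, however, has a genuine gap. You write $c\mathbf{k}c=yb\,\mathbf{k}\,bx$ with the $b$'s on the \emph{inside}, and then try to bound $y\subseteq\mathbf{k}$ and $x\subseteq\mathbf{k}$ to get $yb\mathbf{k}bx\subseteq\mathbf{k}b\mathbf{k}b\mathbf{k}$, hoping to collapse this to $b\mathbf{k}b$. That collapse is false in general: if $b$ has a transversal block then $\mathbf{k}b\mathbf{k}b\mathbf{k}=\mathbf{k}$, yet $b\mathbf{k}b$ need not be $\mathbf{k}$ (e.g.\ on $X=\{1,2\}$ with $b$ having blocks $\{1_{0}\}$ and $\{2_{0},1_{1},2_{1}\}$ one computes $b\mathbf{k}b=b\neq\mathbf{k}$, so $\mathbf{k}\not\subseteq b\mathbf{k}b$). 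Thus your bounding step does not go through, and the transversal/non-transversal dichotomy you flag as the crux is in fact a phantom obstacle created by this wrong factorisation.

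The fix, which is what the paper does, is to put the $b$'s on the \emph{outside}: from $c=bx=yb$ write
\[
c\mathbf{k}c=(bx)\mathbf{k}(yb)=b\,(x\mathbf{k}y)\,b\subseteq b\,\mathbf{k}\,b,
\]
the last step simply because $x\mathbf{k}y\subseteq\mathbf{k}$ and $\subseteq$ is compatible. This is a one-line argument with no case analysis at all, exactly dual to the $\mathbf{d}$-version.
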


\begin{proof}
Suppose there is $c\in\mathcal{P}_{X}$ such that $a\subseteq c\leq b.$ Then
there are $x,y\in\mathcal{P}_{X}$ such that $c=bx=yb$ and hence $a\mathbf{k}%
a\subseteq c\mathbf{k}c=by\mathbf{k}xb\subseteq b\mathbf{k}^{3}b=b\mathbf{k}%
b.$

Conversely, the condition implies $a=aa^{\ast}a\subseteq a\mathbf{k}a\subseteq
b\mathbf{k}b\leq b,$ the latter following from idempotency of $b\mathbf{k}$
and $\mathbf{k}b$.
\end{proof}

\begin{corollary}
In $\mathcal{P}_{X},$ \newline(i)~~~$\subseteq\circ\leq$ is a
preorder;\newline(ii)~~$\leq\circ\subseteq$ is contained in $\subseteq
\circ\leq;$\newline(iii)~$\subseteq\circ\leq$ is the join of $\subseteq$ and
$\leq$ in the lattice of preorders.
\end{corollary}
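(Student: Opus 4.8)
The plan is to follow exactly the pattern of Corollary~\ref{lteinc} from the $\mathcal{B}_X$ case, using the equational characterisation $a\subseteq\circ\leq b\iff a\mathbf{k}a\subseteq b\mathbf{k}b$ just established to supply transitivity cheaply. First I would record the two elementary containments $\subseteq\,\subseteq\,(\subseteq\circ\leq)$ and $\leq\,\subseteq\,(\subseteq\circ\leq)$: given $a\subseteq b$ one has $a\subseteq b\leq b$, and given $a\leq b$ one has $a\subseteq a\leq b$, since both $\subseteq$ and $\leq$ are reflexive. In particular $\subseteq\circ\leq$ is reflexive, witnessed by $c=a$ in $a\subseteq a\leq a$.

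For part~(i) the substantive point is transitivity, and here the preceding Proposition does all the work. If $a\subseteq\circ\leq b$ and $b\subseteq\circ\leq c$, then by that Proposition $a\mathbf{k}a\subseteq b\mathbf{k}b$ and $b\mathbf{k}b\subseteq c\mathbf{k}c$; transitivity of the inclusion order gives $a\mathbf{k}a\subseteq c\mathbf{k}c$, and applying the Proposition once more yields $a\subseteq\circ\leq c$. Hence $\subseteq\circ\leq$ is a preorder.

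Parts~(ii) and~(iii) then follow formally, with no further computation. For~(ii), given $a\leq d\subseteq b$ we have $a\subseteq\circ\leq d$ and $d\subseteq\circ\leq b$ from the two containments above, so the transitivity of part~(i) gives $a\subseteq\circ\leq b$; thus $\leq\circ\subseteq$ is contained in $\subseteq\circ\leq$. For~(iii), $\subseteq\circ\leq$ is a preorder containing both $\subseteq$ and $\leq$, and any preorder $P$ containing both must contain it: from $a\subseteq c\leq b$ we get $a\,P\,c$ and $c\,P\,b$, whence $a\,P\,b$ by transitivity of $P$. So $\subseteq\circ\leq$ is the least preorder above $\subseteq$ and $\leq$, i.e.\ their join.

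I do not expect a genuine obstacle here. The only step where anything nontrivial occurs is transitivity, and that has in effect already been discharged by the preceding Proposition, whose forward direction absorbs the $\mathbf{k}^{3}=\mathbf{k}$-type collapse (via $b\mathbf{k}^{3}b=b\mathbf{k}b$) that would otherwise need to be redone by hand. The entire corollary is thus a transcription of the $\mathcal{B}_X$ argument, with $\beta\omega\beta$ replaced by $b\mathbf{k}b$ and the role of Proposition~\ref{rectang} now played by the displayed Proposition; the only point worth a moment's care is simply keeping the order of the composite $\subseteq\circ\leq$ straight when verifying the two base containments.
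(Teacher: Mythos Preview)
Your argument is correct and is exactly the approach the paper intends: the corollary is stated without proof, as a direct transcription of the $\mathcal{B}_{X}$ argument (Corollary~\ref{lteinc}) with Proposition~\ref{rectang} replaced by the $a\mathbf{k}a\subseteq b\mathbf{k}b$ characterisation. Your write-up simply makes explicit the formal steps the paper leaves to the reader.
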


As in the previous section, this information is summarised in Fig.2, a Hasse
diagram depicting the sublattice of the lattice of preorders on $\mathcal{P}%
_{X}$ generated by $\leq,\subseteq,$ and $\supseteq$~. Filled circles denote
orders, and parentheses contain conditions for $a$ and $b$ to be related in
the corresponding preorder.%

\begin{center}
\includegraphics[
trim=0.000000in 0.000000in 0.000738in -0.002311in,
natheight=3.300900in,
natwidth=3.691100in,
height=7.0348cm,
width=7.8509cm
]%
{Fig2.tif}%
\label{F2}%
\end{center}

\begin{center}
{\small Fig. 2. A sublattice of preorders on }$\mathcal{P}_{X}.$%
{\small ~Filled circles denote orders.}
\end{center}


\begin{thebibliography}{99}                                                                                               %


\bibitem[1]{CP}Clifford, A. H., Preston, G. B.: The Algebraic Theory of
Semigroups. Mathematical Surveys, No. 7, vol. I. Am. Math. Soc., Providence (1964)

\bibitem[2]{DeM}De Morgan, A.: \textquotedblleft On the
Syllogism\textquotedblright\ and Other Logical Writings. Ed. P. Heath, Yale
Univ. Press, New Haven (1966)

\bibitem[3]{E}East, J.: Generators and relations for partition monoids and
algebras. J. Algebra \textbf{339}, 1--26 (2011)

\bibitem[4]{FL}FitzGerald, D. G., Lau, K. W.: On the partition monoid and some
related semigroups. Bull. Aust. Math. Soc. \textbf{83}, 273--288 (2011)

\bibitem[5]{HR}Halverson, T., Ram, A.: Partition algebras. European J. Combin.
\textbf{26}, 869--921 (2005)

\bibitem[6]{Mdx}Maddux, R. D.: Relation Algebras. Studies in Logic and the
Foundations of Mathematics, vol.150. Elsevier B.V., Amsterdam (2006)

\bibitem[7]{My}Markowsky, G.: Ordering $\mathcal{D}$-classes and computing
Schein rank is hard. Semigroup Forum \textbf{44}, 373--375 (1992)

\bibitem[8]{MSS}Marques-Smith, M. P. O., Sullivan, R. P.: Partial orders on
transformation semigroups. Monatsh. Math. \textbf{140}, 103--118 (2003)

\bibitem[9]{Mn}Martin, P. P.: Temperley-Lieb algebras for nonplanar
statistical mechanics---the partition algebra construction. J. Knot Theory
Ramifications \textbf{3}, 51--82 (1994)

\bibitem[10]{Mi}Mitsch, H.: A natural partial order for semigroups. Proc.
Amer. Math. Soc. \textbf{97,} 384--388 (1986)

\bibitem[11]{NP}Namnak, C., Preechasilp, P.: Natural partial orders on the
semigroup of binary relations. Thai J. Math. \textbf{4}, 39--50 (2006)

\bibitem[12]{SS}Sangkhanan, K., Sanwong, J.: Partial orders on semigroups of
partial transformations with restricted range. Bull. Aust. Math. Soc., in
press (DOI:10.1017/S0004972712000020)

\bibitem[13]{W}Wilcox, S.: Cellularity of diagram algebras as twisted
semigroup algebras. J. Algebra \textbf{309}, 10--31 (2007)

\bibitem[14]{Z}Zaretski\u{\i}, K. A.: The semigroup of binary relations. Mat.
Sb. (N.S.) \textbf{61} (\textbf{103}), 291--305 (1963)
\end{thebibliography}
\end{document}